\tikzstyle{shaded}=[fill=red!10!blue!20!gray!30!white]
\tikzstyle{shaded line}=[double=red!10!blue!20!gray!30!white, double distance=1.5mm, draw=black]
\tikzstyle{unshaded}=[fill=white]
\tikzstyle{unshaded line}=[double=white, double distance=1.5mm, draw=black]
\tikzstyle{Tbox}=[circle, draw, thick, fill=white, opaque,]
\tikzstyle{empty box}=[circle, draw, thick, fill=white, opaque, inner sep=2mm]
\tikzstyle{background rectangle}= [fill=red!10!blue!20!gray!40!white,rounded corners=2mm] 
\tikzstyle{on}=[very thick, red!50!blue!50!black]
\tikzstyle{off}=[gray]
\tikzstyle{traces}=[scale=.2, inner sep=1mm]
\tikzstyle{quadratic}=[scale=.4, inner sep=1mm, baseline]
\tikzstyle{annular}=[scale=.7, inner sep=1mm, baseline]
\tikzstyle{make triple edge size}= [scale=.4, inner sep=1mm,baseline] 
\tikzstyle{icosahedron network}=[scale=.3, inner sep=1mm, baseline]
\tikzstyle{ATLsix}=[scale=.25, baseline]
\tikzstyle{TL12}=[scale=.15,baseline]
\tikzstyle{PAdefn}=[scale=.7,baseline]
\tikzstyle{TLEG}=[scale=.5,baseline]
\newcommand{\newsection}[1]{\setcounter{equation}{0}
  \setcounter{definition}{0}
\section{#1}}
\newtheorem{definition}{Definition}[section] 
\newtheorem{theorem}[definition]{Theorem}
\newtheorem{proposition}[definition]{Proposition}
\newtheorem{corollary}[definition]{Corollary} 
\newtheorem{remark}[definition]{Remark}
\newcommand{\AAZ}{{\mathcal A}_{\hbar{}}^{\infty}}
\def \qed %%@
\title{Yang- Mills on Quantum Heisenberg Manifolds}
\author{Partha Sarathi Chakraborty}
\address{The Institute of Mathematical Sciences, CIT Campus, Taramani, Chennai
600113}
\email{parthac@imsc.res.in}
\author{Satyajit Guin}
\address{The Institute of Mathematical Sciences, CIT Campus, Taramani, Chennai
600113}
\email{gsatyajit@imsc.res.in}
\keywords{Yang-Mills, Quantum Heisenberg Manifolds, Connection, Curvature}
\date{\today}
\subjclass[2000]{Primary 46L87, 58B34}
\begin{document}
\rmfamily
\begin{abstract} In the noncommutative geometry program of Connes there are two variations of the concept of Yang-Mills action functional. We show that for the quantum Heisenberg manifolds they agree.
\end{abstract}
\maketitle
\section {Introduction}
Quantum Heisenberg manifolds (QHM) were introduced by Rieffel in \cite{RI1} as strict deformation quantization of Heisenberg manifolds. He introduced a parametric family of deformations and for generic parameter values these are simple $C^*$-algebras with an ergodic action of the Heisenberg group of $3 \times 3$ upper triangular matrices with ones on the diagonal. They admit a unique invariant trace. Connes has showed in \cite{Co1} that whenever one has a $C^*$-dynamical system with dynamics governed by a Lie group and an invariant trace one can extend the basic notions of geometry. Leter Connes and Rieffel (\cite{CR}) has introduced the concepts of Yang-Mills action functional and quantum Heisenberg manifold presents an ideal case for such considerations. Recently Kang \cite{KANG} has studied Yang-Mills for the QHM. However the popular formulation of noncommutative geometry today is through spectral triples. In this approach as well Connes (\cite{CON})  defined the concept of the Yang-Mills action functional. Now starting with a $C^*$-dynamical system with an invariant trace there is a general prescription that produces a candidate for a spectral triple, but there is no general theorem and in each case one has to verify the self-adjointness and the compact resolvent of the Dirac operator.  It was shown in \cite{CS} that in the case of QHM the general principle gives rise to an honest spectral triple. A natural question in this context is whether even in this case these two notions of YM coincide and that is the content of this paper. We show that the notions agree  in the context of QHM. This parallels proposition 13 in the last chapter of \cite{CON} where similar results were obtained for noncommutative two torus.
%%%%%%%%THE ALGEBRA  %%%%%%%%%%%%%%%%
%%%%%%%%%%%%%%%%%%%%%%%%%%%%%%%%%%

\newsection{The Quantum Heisenberg Algebra}
Notation: for $x \in \mathbb{R}$, $e(x)$ stands for $e^{2 \pi  ix}$ where $i=\sqrt{-1}.$
\begin{definition} \rm
For any positive integer $c$, let $S^c$ denote the space of  smooth  functions 
$\Phi : \mathbb{R} \times \mathbb{T} \times \mathbb{Z}  \rightarrow C$ such that 
\begin{itemize}
\item $\Phi (x+k,y,p)=e(ckpy) \Phi(x,y,p) $ for all $k \in \mathbb{Z}$,
\item  for every polynomial $P$ on $\mathbb{Z}$ and every partial differential operator $ \widetilde{X}=\frac{\partial^{m+n}}{\partial x^m \partial y^n} $ on $\mathbb{R} \times \mathbb{T}$ the function $ P(p)(\widetilde{X} \Phi) (x,y,p)$ is bounded on $ K \times \mathbb{Z}$ for any compact subset $K$  of $\mathbb{R} \times \mathbb{T}$.
\end{itemize}
For each  $\hbar,\mu,\nu  \in \mathbb{R},\mu^2 + \nu^2 \ne 0$, let ${\mathcal{A}}^{\infty}_{\hbar} $ denote $S^c$ with product and involution defined by
\begin{eqnarray}
\label{1}
(\Phi \star \Psi)(x,y,p)= \sum_q \Phi(x-\hbar (q-p) \mu ,y-\hbar (q-p) \nu,q) \Psi (x- \hbar q \mu,y-\hbar q \nu,p-q) \end{eqnarray}
\begin{eqnarray} \label{2} \Phi^*(x,y,p)= \bar{\Phi}(x,y,-p). \end{eqnarray}
Then, $\pi : {\mathcal{A}}^{\infty}_{\hbar}\rightarrow \mathcal{B}(L^2( \mathbb{R} \times \mathbb{T} \times \mathbb{Z})) $ given by
\begin{eqnarray} \label{3}
(\pi (\Phi) \xi)(x,y,p)= \sum_q \Phi ( x - \hbar ( q-2p)\mu,y- \hbar (q-2p) \nu,q) \xi ( x,y,p-q)
\end{eqnarray} gives a faithful representation of the involutive  algebra $ \AAZ$. 
${\mathcal{A}}^{c,\hbar}_{\mu,\nu}=$ norm closure of $\pi( \AAZ )$ is called the Quantum Heisenberg Manifold.
%Let $N_{\hbar}$= weak closure of $\pi ( \AAZ )$
\end{definition}
We will identify $\AAZ$ with $\pi( \AAZ) $ without any mention.
Since we are  going to work with fixed parameters $ c, \mu,\nu, \hbar $ we will drop them altogether and denote ${\mathcal{A}}^{c,\hbar}_{\mu,\nu}$ simply by $\mathcal{A}_\hbar$ here the subscript remains merely as a reminiscent of  Heisenberg only to distinguish it from a general algebra.

%%%%%%%%%%%THE GROUP ACTION%%%%%%%%%%%%
%%%%%%%%%%%%%%%%%%%%%%%%%%%%%%%%%%%

 {\bf Action of the heisenberg group:} Let $c$ be a positive integer. Let us consider the  group structure on $G=\mathbb{R}^3=\{(r,s,t): r,s,t \in \mathbb{R}\}$ given by the multiplication 
\begin{eqnarray} \label{G}(r,s,t)(r',s',t')=(r+r',s+s',t+t'+csr'). \end{eqnarray} Later we will give an explicit isomorphism between $G$ and $H_3$, the Heisenberg group of $3 \times 3$ upper triangular matrices with real entries and ones on the diagonal. Through this identification we can identify $G$ with the Heisenberg group.
For $ \Phi \in S^c, (r,s,t) \in \mathbb{R}^3 \equiv G$,  
\begin{eqnarray}
\label{4}
(L_{(r,s,t)} \phi )(x,y,p)=e(p(t+cs(x-r)))\phi(x-r,y-s,p) \end{eqnarray}
extends to an ergodic action of the Heisenberg group on $ {\mathcal{A}}^{c,\hbar}_{\mu,\nu}$.

%%%%%%%%%%%THE TRACE%%%%%%%%%%%%
%%%%%%%%%%%%%%%%%%%%%%%%%%%%%%%

{\bf The Trace:}
 The linear functional $\tau : \AAZ \rightarrow \mathbb{C}$, given by $\tau (\phi)= \int^1_0 \int_{\mathbb{T}} \phi (x,y,0) dx dy $
 % extends to a faithful normal tracial state on $N_\hbar$ which 
is invariant under the Heisenberg group action. So, the group action can be lifted to $L^2( \AAZ)$.
We will denote the action at the Hilbert space level by the same symbol.

\newsection{Yang-Mills in the dynamical system approach}
In (\cite{CR}) Connes and Rieffel introduced Yang-Mills functional in the setting of $C^*$-dynamical systems. We will recall their definition in the context of QHM. Here the dynamics is governed by the Lie group $G$. We can identify $G$ with $H_3$ through the isomorphism that identifies $(r,s,t) \in G$ with the matrix $\begin{pmatrix} 1 & cs & t \cr 0 & 1 & r \cr 0 & 0 & 1 \end{pmatrix} $. Let $\mathfrak{g}$ be the Lie-algebra of $G$. We can identify  $\mathfrak{g}$ with the Lie-algebra of  $H_3$, which is given by $3 \times 3$ upper triangular matrices with real entries with zeros on the diagonal. Fix a real number $\alpha$ greater than one. This number will remain fixed throughout and we will comment about it later. In this approach one has to fix an inner product structure on the Lie algebra of the underlying Lie group and in our case we do so  by declaring  the following basis,
\begin{eqnarray}\label{basis}
X_1=\left(\begin{matrix} 0&0&0 \cr 0&0&1 \cr 0& 0&0\cr \end{matrix}\right),
X_2=\left(\begin{matrix} 0&c&0 \cr 0&0&0\cr 0& 0&0\cr\end{matrix}\right),
X_3=\left(\begin{matrix} 0&0&c\alpha  \cr 0&0&0 \cr 0& 0&0\cr\end{matrix}\right) 
\end{eqnarray}
as orthonormal. Their Lie  bracket is given by,
\begin{eqnarray}\label{Lie}
 [X_1,X_3]=[X_2,X_3]=0, [X_1,X_2]=-\frac{1}{\alpha}X_3.
\end{eqnarray}
The exponential map from $\mathfrak{g}$ to $G$ acts on these elements as follows 
\begin{eqnarray*} exp(rX_1)=(r,0,0), exp(sX_2)=(0,s,0), \mbox{ and } exp(tX_3)=(0,0,c \alpha t).
\end{eqnarray*}
 For $X \in \mathfrak{g}$, let $d_X$ be the 
derivation of $\AAZ$ given by $d_X(a)=\frac{d}{dt}\mid_{t=0}L_{exp(tX)}(a)$. Let us denote the $d_{X_j}$'s, for $j=1,2,3$ by $d_j$. Then they are given by 
\begin{eqnarray}
\label{derivation-I}
d_1(f)&=& - \frac {\partial f } {\partial x}, \\
\label{derivation-II} d_2 (f) &=& 2 \pi i c p x f(x,y,p) -  \frac { \partial f } {\partial y}, \\
\label{derivation-III} d_3 (f) &=& 2 \pi i p c \alpha f(x,y,p). 
\end{eqnarray}
We now recall the  Hermitian structure on finitely generated projective modules. This is needed to define the Yang-Mills action functional. Let $\mathcal{H}$ be a Hilbert space and $\mathcal{A}$ be a unital involutive subalgebra of $\mathcal{B}(\mathcal{H})$, the algebra of bounded operators on $\mathcal{H}$, closed under holomorphic function calculus.
Let $\mathcal{E}$ be a finitely generated projective $\mathcal{A}$ module.
Define $\mathcal{E}^*$ as the space of $\mathcal{A}$ linear mappings from $\mathcal E$ to $\mathcal{A}$. Clearly $\mathcal{E}^*$ is a right $\mathcal{A}$ module.
\begin{definition}\rm \label{Hermitian-1}
A  Hermitian structure on $\mathcal{E}$ is an $\mathcal{A}$-valued positive-definite inner product 
$\langle \, \, , \, \rangle_{\mathcal{A}} $ such that, 
\begin{enumerate}
\item [(a)] $\langle \xi , \xi' \rangle_{\mathcal{A}}^* = \langle \xi' , \xi \rangle_{\mathcal{A}}\, , \, \, \, \forall \, \xi , \xi' \in \mathcal{E}$.
\item [(b)] $\langle \xi , \xi'. a \rangle_{\mathcal{A}} =  (\langle \xi , \xi' \rangle_{\mathcal{A}}) .a\, , \, \, \, \forall \, \xi , \xi' \in \mathcal{E},\, \, \forall \, a \in \mathcal{A}$.
\item [(c)] The map $\xi \longmapsto \Phi_\xi$ from $\mathcal{E}$ to $\mathcal{E}^*\,$, given by $\Phi_\xi(\eta) = \langle\xi,\eta\rangle_{\mathcal{A}} \, , \, \forall \eta \in \mathcal{E}\,$, gives an $\mathcal{A}$-module isomorphism between $\mathcal{E}$ and $\mathcal{E}^*$. This property will be referred as the self-duality of $\mathcal E$.
\end{enumerate}
\end{definition}
Let $\mathcal{E}$ be a finitely generated projective $\AAZ$ module with a hermitian structure. A connection is a map 
\begin{eqnarray}\label{connection-I}
          \nabla &:& \mathcal{E} \rightarrow \mathcal{E} \otimes {\mathfrak{g}}^*, \mbox{ such that }\\
\nabla_X(\xi.a)& =&  \nabla_X(\xi).a + \xi .d_X(a) \forall \xi \in \mathcal{E}, \, \forall a \in {\AAZ}.
\end{eqnarray}
We shall say that $\nabla$ is compatible with respect to the Hermitian structure on $\mathcal{E}$ 
iff~:
\begin{eqnarray}\label{compatibility-I}
\langle \nabla_X \, \xi\, , \xi' \rangle_{\mathcal{A}} \, + \, \langle \xi\, , \nabla_X \, \xi' 
\rangle_{\mathcal{A}} = d_X (\langle\, \xi , \xi'\, \rangle_{\mathcal{A}}) \, , \quad \forall \, \xi\, , 
\xi' \in \mathcal{E},\, \, \forall \, X \in \mathfrak{g} \, .
\end{eqnarray}
We will denote the set of compatible connections by $C(\mathcal{E})$.
The  curvature  $\Theta_\nabla $ of a  connection $\nabla$ is the alternating bilinear $End(\mathcal{E})$-valued form on $\mathfrak{g}$ defined by,
\begin{eqnarray*}
\Theta_\nabla (X \wedge Y) = [\nabla_X , \nabla_Y] - \nabla_{[X,Y]}, \forall X,Y \in  \mathfrak{g}.
\end{eqnarray*}
\begin{proposition}\label{compatible-connection-I} Let $\mathcal E$ be a finitely generated projective $\AAZ$ module. Then  the space $C(\mathcal{E})$ 
of compatible  connections is given by triples of linear maps $\nabla_j : \mathcal{E} \rightarrow \mathcal {E}, j=1,2,3 $ such that
\begin{align} \label{compatible-connection-I1}
\nabla_j(\xi.a) &= \nabla_j(\xi).a+ \xi. d_j(a),& \, &j=1,2,3 \\
\label{compatible-connection-I2} d_j (\langle\, \xi , \xi'\, \rangle_{\mathcal{A}})&= \langle \nabla_j \, \xi\, , \xi' \rangle_{\mathcal{A}} \, + \, \langle \xi\, , \nabla_j \, \xi' 
\rangle_{\mathcal{A}},&    \forall \, \xi\, , 
\xi' \in \mathcal{E}, &j=1,2,3 \, .
\end{align}
\end{proposition}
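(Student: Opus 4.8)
The plan is to use the orthonormal basis $\{X_1,X_2,X_3\}$ of $\mathfrak{g}$ together with its dual basis $\{\omega_1,\omega_2,\omega_3\}$ of $\mathfrak{g}^*$ to identify $\mathcal{E}\otimes\mathfrak{g}^*$ with $\mathcal{E}\oplus\mathcal{E}\oplus\mathcal{E}$, and then to exploit the fact that both the Leibniz axiom and the compatibility relation are $\mathbb{R}$-linear in the Lie-algebra variable $X$. Because of this linearity, each condition holds for every $X\in\mathfrak{g}$ exactly when it holds on the three basis vectors, which is precisely what reduces the data of a compatible connection to a triple $(\nabla_1,\nabla_2,\nabla_3)$.

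First I would make the correspondence $\nabla\leftrightarrow(\nabla_1,\nabla_2,\nabla_3)$ explicit. Given a connection $\nabla:\mathcal{E}\rightarrow\mathcal{E}\otimes\mathfrak{g}^*$, set $\nabla_j:=(\mathrm{id}\otimes\mathrm{ev}_{X_j})\circ\nabla$, so that $\nabla_{X_j}=\nabla_j$; conversely, from a triple of linear maps $\nabla_j:\mathcal{E}\rightarrow\mathcal{E}$ define $\nabla:=\sum_j\nabla_j(\,\cdot\,)\otimes\omega_j$. Since $\{\omega_j\}$ is dual to $\{X_j\}$, these two assignments are mutually inverse, giving a bijection between maps $\mathcal{E}\rightarrow\mathcal{E}\otimes\mathfrak{g}^*$ and triples of maps $\mathcal{E}\rightarrow\mathcal{E}$. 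Note that $X\mapsto\nabla_X=(\mathrm{id}\otimes\mathrm{ev}_X)\circ\nabla$ is automatically $\mathbb{R}$-linear in $X$, being evaluation of $\nabla(\xi)\in\mathcal{E}\otimes\mathfrak{g}^*$ against $X$.

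Next I would record the one structural fact driving the reduction: the assignment $X\mapsto d_X$ is $\mathbb{R}$-linear, since $d_X=\frac{d}{dt}\big|_{t=0}L_{\exp(tX)}$ is the differential at the identity of the smooth action $L$. Writing a general $X=\sum_j c_j X_j$, we then have $d_X=\sum_j c_j d_j$ and $\nabla_X=\sum_j c_j\nabla_j$. With this, the forward implication is immediate: evaluating the Leibniz property \eqref{connection-I} and the compatibility relation \eqref{compatibility-I} at $X=X_j$ yields exactly \eqref{compatible-connection-I1} and \eqref{compatible-connection-I2}. For the converse, starting from a triple satisfying \eqref{compatible-connection-I1} and \eqref{compatible-connection-I2}, I would expand both sides of \eqref{connection-I} and \eqref{compatibility-I} for arbitrary $X=\sum_j c_j X_j$; multiplying the per-basis identities by $c_j$ and summing reassembles them into the identities for $X$, so the assembled $\nabla$ is a genuine compatible connection.

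The argument is essentially bookkeeping, so I do not expect a deep obstacle; the single point deserving care is the $\mathbb{R}$-linearity of $X\mapsto d_X$, which is what legitimizes replacing the quantifier over all $X\in\mathfrak{g}$ by the three basis derivations $d_1,d_2,d_3$. I would also confirm that the tensor product $\mathcal{E}\otimes\mathfrak{g}^*$ is taken over $\mathbb{R}$, so that the three components $\nabla_j$ are genuinely independent and unconstrained, and remark that clause (c) of Definition \ref{Hermitian-1} (self-duality) plays no role in this parametrization — it will be needed only later, for producing compatible connections and defining the Yang–Mills functional.
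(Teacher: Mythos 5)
Your proposal is correct and follows essentially the same route as the paper's own proof: restrict $\nabla$ to the basis $X_1,X_2,X_3$ in one direction, and extend linearly from the triple $(\nabla_1,\nabla_2,\nabla_3)$ in the other. The only difference is expository — you make explicit the $\mathbb{R}$-linearity of $X\mapsto d_X$ and the dual-basis identification of $\mathcal{E}\otimes\mathfrak{g}^*$ with $\mathcal{E}\oplus\mathcal{E}\oplus\mathcal{E}$, which the paper leaves as ``clearly satisfied.''
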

\begin{proof} Given a compatible connection $\nabla$ let $\nabla_j=\nabla_{X_j}$ for $j=1,2,3.$. The condition (\ref{compatible-connection-I1},
\ref{compatible-connection-I2}) holds because  $\nabla$ satisfies conditions (\ref{connection-I},\ref{compatibility-I}). Conversely if 
(\ref{compatible-connection-I1}, \ref{compatible-connection-I2}) holds and we define $\nabla$ by specifying its components on the basis \ref{basis} such that $\nabla_{X_j}=\nabla_j$, then clearly the conditions of a compatible connection (\ref{connection-I},\ref{compatibility-I}) are satisfied. 
\end{proof}

For QHM the curvature is given by
\begin{eqnarray*}
 \Theta_\nabla(X_1 \wedge X_3)& = &[\nabla_{X_1}, \nabla_{X_3}],\\
\Theta_\nabla(X_2 \wedge X_3)& = &[\nabla_{X_2}, \nabla_{X_3}],\\
\Theta_\nabla(X_1 \wedge X_2)& = &[\nabla_{X_1}, \nabla_{X_2}]+\frac{1}{\alpha}\nabla_{X_3}.\\
\end{eqnarray*}
Here the third equality uses the relation $[X_1,X_2]=-\frac{1}{\alpha}X_3$ from (\ref{Lie}).
\begin{definition}\rm \label{YM1}
 Let $\mathcal{E}$ be a finitely generated projective $\AAZ$ module with a hermitian structure. Then the Yang-Mills action functional for a compatible connection $\nabla \in C(\mathcal{E})$ is given by
\begin{eqnarray}\label{YM2}
 YM(\nabla)=-\widetilde{\tau}({([\nabla_{X_1},\nabla_{X_3}])}^2 + {([\nabla_{X_2},\nabla_{X_3}])}^2+([\nabla_{X_1},\nabla_{X_2}]+\frac{1}{\alpha}\nabla_{X_3})^2)).
\end{eqnarray}
\end{definition}

\newsection {Yang-Mills for spectral triples}
In (\cite{CON}),  Connes gave a second approach to Yang-Mills for spectral triples.  In this approach one begins with a spectral triple.  Recall that a spectral triple is given by a triple $(\mathcal{A},\mathcal{H},D)$ where 
\begin{enumerate}
\item[(i)] $\mathcal{H}$ is a separable Hilbert space,
\item[(ii)] $\mathcal{A} \subseteq \mathcal{B}(\mathcal{H})$ is a unital involutive sub-algebra closed under holomorphic function calculus,
\item[(iii)] $D$ is a self-adjoint operator with compact resolvent such that $[D,\mathcal{A}] \subseteq \mathcal{B}(\mathcal{H}).$
\end{enumerate}
A spectral triple is $d+$-summable if the Dixmier trace of ${|D|}^{-d}$ is finite. Starting with a $d+$-summable spectral triple Connes defines a complex as follows.
\begin{definition} \rm
Let $(\mathcal{A} ,\mathcal{H} ,D)$ be a $d+$-summable spectral triple. Then the space of universal $k$-forms is given by 
$\Omega^k(\mathcal{A})=\{ \sum_{i=1}^N a_0^{i} \delta a_1^i \ldots \delta a_k^i | n \in \mathbb{N}, a_j^i \in \mathcal{A} \} $. The direct sum of all these spaces $ \Omega^\bullet(\mathcal{A})=\oplus_0^{\infty} \Omega^k(\mathcal{A})$
is the unital graded algebra of universal forms. Here $\delta $ is an abstract linear operator with $\delta^2=0,\delta(ab)=\delta(a)b+a\delta(b)$. $\Omega^\bullet (\mathcal{A})$ becomes a *algebra under the involution ${(\delta a)}^*=-\delta (a^* ) \forall a \in \mathcal{A}$. Let $ \pi: \Omega^\bullet ( \mathcal{A})  \rightarrow \mathcal{B} (\mathcal{H})$ be the $\star$-representation given by $ \pi(a)=a,\pi (\delta a)=[D,a].$
Let $J_k= ker \pi |_{\Omega^k(\mathcal{A})}$ The unital graded differential $\star$-algebra of differential forms
$\Omega^\bullet_D(\mathcal{A})$ is defined by
\[
\Omega_D^\bullet ( \mathcal{A} ) = \oplus_0^\infty \Omega_D^k(\mathcal{A}), \Omega_D^k(\mathcal{A})= \Omega^k(\mathcal{A})/(J_k+\delta J_{k-1}) \cong \pi (\Omega^k(\mathcal{A})/\pi ( \delta J_{k-1}).
\]
The abstract differential $\delta$ induces a differential $\tilde{d}$ on the complex $\Omega^\bullet_D(\mathcal{A})$ so that we get a chain complex $(\Omega^\bullet_D(\mathcal{A}),\tilde{d})$ and a chain map $\pi_D:\Omega^\bullet ( \mathcal{A} ) \rightarrow \Omega^\bullet_D(\mathcal{A})$ such that the following diagram 
\begin{center}
\begin{tikzpicture}[node distance=3cm,auto]
\node (Up)[label=above:$\pi_D$]{};
\node (A)[node distance=1.5cm,left of=Up]{$\Omega^\bullet ( \mathcal{A} )$};
\node (B)[node distance=1.5cm,right of=Up]{$\Omega^\bullet_D(\mathcal{A})$};
\node (Down)[node distance=2cm,below of=Up, label=below:$\pi_D$]{};
\node(C)[node distance=1.5cm,left of=Down]{$\Omega^{\bullet+1} ( \mathcal{A} )$};
\node(D)[node distance=1.5cm,right of=Down]{$\Omega^{\bullet+1}_D(\mathcal{A})$};
\draw[->](A) to (B);
\draw[->](C) to (D);
\draw[->](B)to node{{ $\tilde{d}$}}(D);
\draw[->](A)to node[swap]{{ $\delta$}}(C);
\end{tikzpicture} 
\end{center}
commutes.
\end{definition}
\begin{definition} \rm
Let $\mathcal{E}$ be a  Hermitian, finitely generated projective module over $\mathcal{A}$. A compatible connection on $\,\mathcal{E}\,$ is a linear mapping
$\,\widetilde{\nabla} : \mathcal{E} \longrightarrow \mathcal{E} \, \otimes _\mathcal{A} \Omega _{D}^1\,$ such that,
\begin{enumerate}
\item[(a)] $\widetilde{\nabla}  (\xi a) = (\widetilde{\nabla} \xi)a + \xi \otimes \tilde{d}a, \, \, \, \, \, \forall \xi \in \mathcal{E} , a \in \mathcal{A}$;
\item[(b)] $( \, \xi , \widetilde{\nabla}  \eta \, ) - ( \, \widetilde{\nabla}  \xi , \eta \, ) = \tilde{d}\langle \, \xi , \eta \, \rangle_\mathcal{A}\, \, \, \, \, \, \, \forall \xi , \eta \in \mathcal{E}$.
\end{enumerate}
\end{definition}
The meaning of the last equality in $\Omega_D^1$ is, 
if $ \widetilde{\nabla} (\xi ) = \sum\xi_j\otimes \omega_j $, with $\xi_j \in \mathcal{E}\, , \, \omega_j \in \Omega_D^1(\mathcal{A})$,
then $ (\widetilde{\nabla}  \xi, \eta)=\sum \omega_j^* \langle\xi_j , \eta\rangle_\mathcal{A}$.

Also, any two compatible connections can only differ by an element of 
Hom$_\mathcal{A}(\mathcal{E}\, ,\, \mathcal{E} \otimes_\mathcal{A} \Omega_D^1(\mathcal{A}))$. That is, the space of all
compatible connections on $\,\mathcal{E}$, which we denote by $\widetilde C(\mathcal{E})$, is an affine space with associated vector
space $Hom_\mathcal{A}(\mathcal{E}\, ,\, \mathcal{E} \otimes_\mathcal{A} \Omega_D^1(\mathcal{A}))$.

To define the  curvature $\varTheta$ of a connection $\widetilde{\nabla}$, one first extends $\widetilde{\nabla}$ to a unique linear mapping $\widetilde{\nabla}$ from
$\mathcal{E} \otimes_{\mathcal{A}} \Omega_D^1$ to $\mathcal{E} \otimes_{\mathcal{A}} \Omega_D^2$ such that,
\begin{eqnarray}\label{extended-connection}
\widetilde \nabla (\xi \otimes \omega) = (\widetilde{\nabla} \xi)\omega + \xi \otimes \tilde d\omega, \, \, \, \forall \, \, \xi \in \mathcal{E}, \, \, \omega \in \Omega_D^1.
\end{eqnarray}
It can be easily checked that $\widetilde\nabla$, defined above, satisfies the Leibniz rule.

It follows that $\varTheta = \widetilde\nabla \circ \widetilde{\nabla}$ is an element of $Hom_\mathcal{A}(\mathcal{E},\mathcal{E} \otimes_\mathcal{A} \Omega_D^2)$. Recall that $\Omega_D^2 \cong \pi(\Omega^2)/\pi(dJ_1)$. Let $\mathcal{H}_2$ be the Hilbert space completion
 of $\pi(\Omega^2)$ with respect to the inner-product 
\begin{eqnarray}\label{inner-product}
\langle T_1,T_2\rangle = Tr_\omega(T_1^*T_2|D|^{-d}),\, \forall \, T_1,T_2 \in \pi(\Omega^2). 
\end{eqnarray}
Let $\widetilde{\mathcal{H}}_2$ be the Hilbert space completion of $\pi(dJ_1)$ with the above inner-product. Clearly $\widetilde{\mathcal{H}}_2 \subseteq \mathcal{H}_2$. Let $P$ be the orthogonal projection of $\mathcal{H}_2$ onto the orthogonal complement of  $\widetilde{\mathcal{H}}_2$. Now define $\langle\, [T_1],[T_2]\,\rangle_{\Omega_D^2} = \langle PT_1,PT_2\rangle,\,$ for all $\, [T_i] \in \Omega_D^2$. This gives a well-defined inner-product on $\Omega_D^2$.
Now the inner-product on $Hom_\mathcal{A}(\mathcal{E},\mathcal{E} \otimes_\mathcal{A} \Omega_D^2)$ is described as follows. Suppose $\mathcal{E}=p{\mathcal{A}}^q$, where $p \in M_q(\mathcal{A})$ is a projection.
Then we have the embedding
$$Hom_\mathcal{A}(\mathcal{E},\mathcal{E} \otimes_\mathcal{A} \Omega_D^2) = Hom_\mathcal{A}(pA^q,pA^q \otimes_\mathcal{A} \Omega_D^2) \cong Hom_\mathcal{A}(pA^q,p(\Omega_D^2)^q)\subseteq Hom_\mathcal{A}(\mathcal{A}^q,(\Omega_D^2)^q).$$ The inner product between $\phi,\psi \in Hom_\mathcal{A}(\mathcal{E},\mathcal{E} \otimes_\mathcal{A} \Omega_D^2)$ is given by $$\langle\langle \phi,\psi\rangle\rangle = \displaystyle\sum_{j,k} \langle {(\phi(e_j))}_k,{(\psi(e_j))}_k\rangle_{\Omega_D^2}$$ where $\{e_1,\ldots,e_q\}$ is the standard basis of $\mathcal{A}^q$ and ${(\phi(e_j))}_k$, (respectively ${(\psi(e_j))}_k$ denote the $k$-th component of $\phi(e_j)$ $( \psi(e_j))$. 
\begin{remark} \label{inner-prod}
Let us assume that $\Omega_D^2$ is free of rank $n$ and the inner product described above  between two $n$-tuples $\underline{a},\underline{b} \in \Omega_D^2={\mathcal{A}}^n$ is given by $<\underline{a},\underline{b}>=\sum_j {\rm Tr}_\omega(a_j^* b_j){|D|}^{-d}$. If we use the embedding \[Hom_\mathcal{A}(\mathcal{E},\mathcal{E} \otimes_\mathcal{A} \Omega_D^2)\cong \oplus_{k=1}^n Hom_\mathcal{A}(\mathcal{E},\mathcal{E}) \subseteq \oplus_{k=1}^n M_q(\mathcal{A})\cong \oplus_{k=1}^n \mathcal{A} \otimes M_q(\mathbb{C}). \]
Then $\phi, \psi \in Hom_\mathcal{A}(\mathcal{E},\mathcal{E} \otimes_\mathcal{A} \Omega_D^2)$ can be identified with two $n$-tuples $\phi=(\phi_1,\cdots, \phi_n),$ and $\psi=(\psi_1,\cdots,\psi_n)$, with each $\phi_j,\psi_j \in \mathcal{A} \otimes M_q(\mathbb{C})$. Let $\tau'$ be the trace on $\mathcal{A}$ given by $a \mapsto {\rm Tr}_\omega a {|D|}^{-d}$ then
$$\langle\langle \phi, \psi \rangle \rangle= \sum_{j=1}^n \tau' \otimes {\rm{Trace}} \phi_j^* \psi_j.$$
\end{remark}

\begin{definition} \rm
 The functional on $\widetilde C(\mathcal{E})$ given by $\widetilde{\textit{YM}}\,(\widetilde{\nabla}) = \langle\langle\, \varTheta,\varTheta\, \rangle\rangle$ is called the Yang-Mills functional.
\end{definition}

\newsection{Equivalence of the two approaches}
In this section we will show that for the quantum Heisenberg manifolds there is a correspondence between the set of compatible connections so that the corresponding Yang-Mills functionals agree. To that end one must construct a spectral triple on this algebra. There is a general recipe that begins with $(A,G,\alpha,\tau)$ a $C^*$-dynamical system with an invariant trace. Of course one also requires that the dynamics is governed by a Lie group. Let us assume that the Lie group has dimension $n$. Then by fixing a basis $X_1,\cdots,X_n$ of the Lie-algebra of the Lie group one produces a densely defined operator $D$ on the Hilbert space $\mathcal{H}=L^2(A,\tau) \otimes {\mathbb{C}}^N, N=2^{\lfloor n/2 \rfloor} $. There is a natural representation of the algebra on $\mathcal{H}$ and $D$  produces bounded commutators with the image of $\mathcal{A}$, the smooth algebra of the system. However, in general one does not know whether $D$ admits a self-adjoint extension with compact resolvent. It was shown in (\cite{CS}) that for QHM indeed $D$ admits a self-adjoint extension with compact resolvent provided one chooses the Lie algebra basis considered in (\ref{basis}). For our present purpose it is enough to recall the operators $[D,\phi]$ for $\phi \in S^c$. Note that here the dimension of the associated Lie group is three.
Let $\sigma_1,\sigma_2,\sigma_3$ be $2 \times 2$ self-adjoint trace-less matrices given by
\begin{eqnarray*}
\sigma_1=\left(\begin{matrix} 1& 0 \cr 0 &-1 \cr \end{matrix}\right),
\,  \, \sigma_2=\left(\begin{matrix} 0& -1 \cr -1 &0 \cr \end{matrix}\right),
\, \,  \sigma_3=\left(\begin{matrix} 0& i \cr -i &0 \cr \end{matrix}\right). 
\end{eqnarray*}
Then,
\begin{eqnarray*}
\sigma_1 \sigma_2 =i \sigma_3, \, \, \sigma_2 \sigma_3 = i \sigma_1,\, \, \sigma_3 \sigma_1 = i \sigma_2.
\end{eqnarray*}
Let $\phi \in S^c$, then 
\begin{eqnarray}\label{Dirac-I}
[D,\phi]=\sum \delta_j (\phi) \otimes \sigma_j \mbox{ where }
\delta_j(\phi)= i d_j(\phi)
\end{eqnarray}  and the derivations $d_j$ are given by (\ref{derivation-I},\ref{derivation-II},\ref{derivation-III}).  The $\delta_j$'s satisfy the following commutation relations
\begin{eqnarray}
\label{delta-bracket} [\delta_1,\delta_3]=[\delta_2,\delta_3]=0, [\delta_1,\delta_2]=-\frac{i}{\alpha}\delta_3. 
\end{eqnarray}
%Since $[d_1,d_2]=-\frac{1}{\alpha} d_3$. 

These forms were computed in \cite{CS}. In the following proposition we recall the description of the space of forms as $\AAZ-\AAZ$-bimodules.
\begin{proposition} \label{recall-I} \rm \hfill
\begin{enumerate}
\item[(i)]  The space of one forms as an $\AAZ-\AAZ$-bimodule is given by 
\begin{eqnarray*}
  \Omega^1_D ( \AAZ) & =  & \{ \sum a_j \otimes \sigma_j| a_j \in \AAZ , \sigma_j 's  \mbox { as above  } \}  \subseteq \AAZ \otimes M_2(\mathbb{C}) \subseteq \mathcal{B}(\mathcal{H})  \\
    & \cong & \AAZ \oplus \AAZ \oplus \AAZ . 
\end{eqnarray*}
\item[(ii)] $ \pi (\Omega^k (\AAZ))  = \AAZ \otimes M_2(\mathbb{C})= \AAZ \oplus \AAZ \oplus \AAZ \oplus \AAZ.$
\item[(iii)]  $\pi ( \delta  J_1)=\AAZ \otimes I_2 \subseteq \AAZ \otimes M_2(\mathbb{C}) \subseteq \mathcal{B}(\mathcal{H})  $.
\item[(iv)]  The space of two forms as an $\AAZ-\AAZ$-bimodule is given by 
\begin{eqnarray*}
  \Omega^2_D ( \AAZ) & =  & \{ \sum a_j \otimes \sigma_j| a_j \in \AAZ , \sigma_j 's  \mbox { as above  } \}  \subseteq \AAZ \otimes M_2(\mathbb{C}) \subseteq \mathcal{B}(\mathcal{H})  \\
    & \cong & \AAZ \oplus \AAZ \oplus \AAZ . 
\end{eqnarray*}
\item[(v)] The product map from $\Omega^1_D ( \AAZ) \times \Omega^1_D ( \AAZ) $ to $\Omega^2_D ( \AAZ)$ is given by
\[(a \otimes \sigma_j) \cdot (b \otimes \sigma_k)=(1-\delta_{jk})ab \otimes \sigma_j \sigma_k , \forall j,k=1,2,3.\]
Here $\delta_{jk}$ is the Kronecker delta.
\end{enumerate}
\end{proposition}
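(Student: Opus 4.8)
The plan is to reduce everything to the Clifford skeleton carried by the matrices $\sigma_1,\sigma_2,\sigma_3$, using the explicit commutator formula $[D,\phi]=\sum_j\delta_j(\phi)\otimes\sigma_j$ of (\ref{Dirac-I}) together with the fact that $\{I_2,\sigma_1,\sigma_2,\sigma_3\}$ is a linear basis of $M_2(\mathbb{C})$ with $\sigma_j^2=I_2$ and $\sigma_j\sigma_k=\pm i\,\sigma_l$ whenever $\{j,k,l\}=\{1,2,3\}$. For (i), since $\pi$ restricted to $\Omega^0(\AAZ)=\AAZ$ is the faithful representation (\ref{3}), we have $J_0=0$, hence $\delta J_0=0$ and $\Omega^1_D(\AAZ)=\pi(\Omega^1(\AAZ))$. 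A generic $\sum_i a_0^i\,\delta a_1^i$ maps under $\pi$ to $\sum_j\big(\sum_i a_0^i\,\delta_j(a_1^i)\big)\otimes\sigma_j$, which lies in the $\AAZ$-span of $\sigma_1,\sigma_2,\sigma_3$; the reverse containment, that each coefficient independently exhausts $\AAZ$, is the one analytic input coming from the concrete structure of $S^c$ established in \cite{CS}. This yields $\Omega^1_D(\AAZ)\cong\AAZ\oplus\AAZ\oplus\AAZ$. For (ii), already at $k=2$ the products $(a\otimes\sigma_j)(b\otimes\sigma_k)=ab\otimes\sigma_j\sigma_k$ realize all four basis matrices $I_2$ (from $j=k$) and $\sigma_l$ (from $j\ne k$), so $\pi(\Omega^k(\AAZ))=\AAZ\otimes M_2(\mathbb{C})$ for every $k\ge 1$.

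The step I would carry out in full is the forward inclusion in (iii), namely $\pi(\delta J_1)\subseteq\AAZ\otimes I_2$. Write $c_j(\omega)=\sum_i a_0^i\,\delta_j(a_1^i)$, so that $\omega=\sum_i a_0^i\,\delta a_1^i\in J_1$ means $c_1(\omega)=c_2(\omega)=c_3(\omega)=0$. Then $\pi(\delta\omega)=\sum_i[D,a_0^i][D,a_1^i]=\sum_{j,k}\big(\sum_i\delta_j(a_0^i)\delta_k(a_1^i)\big)\otimes\sigma_j\sigma_k$, whose $I_2$-part is $\sum_{i,j}\delta_j(a_0^i)\delta_j(a_1^i)$ and whose $\sigma_l$-parts come from the pairs $j\ne k$. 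Applying $\delta_m$ to the relation $c_j(\omega)=0$ lets me trade $\sum_i\delta_m(a_0^i)\delta_j(a_1^i)$ for $-\sum_i a_0^i\,\delta_m\delta_j(a_1^i)$; collecting the two contributions to $\sigma_3$ (from $\sigma_1\sigma_2$ and $\sigma_2\sigma_1$) then collapses its coefficient to $-i\sum_i a_0^i[\delta_1,\delta_2](a_1^i)$, which by $[\delta_1,\delta_2]=-\tfrac{i}{\alpha}\delta_3$ from (\ref{delta-bracket}) equals $-\tfrac1\alpha c_3(\omega)=0$. The coefficients of $\sigma_1$ and $\sigma_2$ vanish the same way, now directly from $[\delta_2,\delta_3]=[\delta_1,\delta_3]=0$. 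Hence only the diagonal term survives and $\pi(\delta J_1)\subseteq\AAZ\otimes I_2$.

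The main obstacle is the reverse inclusion $\AAZ\otimes I_2\subseteq\pi(\delta J_1)$, i.e. surjectivity of $\pi\circ\delta$ onto the full diagonal. I would reduce it to a single-term model: for a target $b_0\in\AAZ$ seek $e,g\in\AAZ$ with $e\star\delta_j(g)=0$ for $j=1,2,3$, so that $\omega=e\,\delta g\in J_1$; differentiating these three relations shows that $\pi(\delta\omega)=[D,e][D,g]$ carries no residual $\sigma_l$-part and reduces to $\big(\sum_j\delta_j(e)\delta_j(g)\big)\otimes I_2$. What remains is to exhibit such annihilating pairs $(e,g)$ whose diagonal output $\sum_j\delta_j(e)\delta_j(g)$ sweeps out all of $\AAZ$. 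This is the delicate point: one must choose $e$ and $g$ supported on suitable $p$-slices, with $x,y$-profiles respecting the covariance $\Phi(x+k,y,p)=e(ckpy)\Phi(x,y,p)$, so that the twisted convolution (\ref{1}) forces $e\star\delta_j(g)=0$ while leaving $\sum_j\delta_j(e)\star\delta_j(g)$ nonzero and tunable; this is precisely the computation carried out in \cite{CS}, which I would reproduce.

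Granting (ii) and (iii), part (iv) is immediate: $\Omega^2_D(\AAZ)=\pi(\Omega^2(\AAZ))/\pi(\delta J_1)=(\AAZ\otimes M_2(\mathbb{C}))/(\AAZ\otimes I_2)\cong\AAZ\otimes\big(M_2(\mathbb{C})/\mathbb{C}I_2\big)\cong\AAZ\oplus\AAZ\oplus\AAZ$, with the classes of $\sigma_1,\sigma_2,\sigma_3$ furnishing the three copies. Finally (v) is read off from (iv): in $\pi(\Omega^2(\AAZ))$ one has $(a\otimes\sigma_j)(b\otimes\sigma_k)=ab\otimes\sigma_j\sigma_k$, and passing to the quotient by $\AAZ\otimes I_2$ kills the case $j=k$ (where $\sigma_j\sigma_k=I_2$) while preserving $j\ne k$ (where $\sigma_j\sigma_k=\pm i\sigma_l\notin\mathbb{C}I_2$), giving exactly $(1-\delta_{jk})\,ab\otimes\sigma_j\sigma_k$.
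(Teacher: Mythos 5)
Your proof is correct, but it is organized quite differently from the paper's, which is essentially a citation: the authors observe that (i)--(iv) are already established in \cite{CS} and argue only (v), noting that $\Omega^1_D(\AAZ)$ and $\Omega^2_D(\AAZ)$ are identified with subspaces of $\AAZ\otimes M_2(\mathbb{C})$ and that the product is the one induced from $\AAZ\otimes M_2(\mathbb{C})$ --- the factor $(1-\delta_{jk})$ arising because the $j=k$ products land in $\pi(\delta J_1)=\AAZ\otimes I_2$, which is set to zero in the quotient defining $\Omega^2_D$. Your treatment of (v) is exactly this argument spelled out, so there you agree with the paper. The difference is that you reconstruct (i)--(iv) rather than cite them: you reduce everything to the Pauli algebra of the $\sigma_j$'s plus two analytic inputs, which you correctly isolate and attribute to \cite{CS} (that the three coefficients in $\pi(\Omega^1)$ independently exhaust $\AAZ$ in (i), and the reverse inclusion $\AAZ\otimes I_2\subseteq\pi(\delta J_1)$ in (iii)). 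Your full computation of the forward inclusion $\pi(\delta J_1)\subseteq\AAZ\otimes I_2$ --- trading $\sum_i\delta_m(a_0^i)\delta_j(a_1^i)$ for $-\sum_i a_0^i\,\delta_m\delta_j(a_1^i)$ via the Leibniz rule and then collapsing the cross terms into commutators handled by (\ref{delta-bracket}) --- is correct, and it is the same mechanism the paper itself deploys later in the proof of Proposition \ref{differential}. What the paper's route buys is brevity; yours buys a clean separation between the soft Clifford-algebra bookkeeping (which yields (ii), (iv), (v)) and the two hard surjectivity statements that genuinely require the structure of $S^c$.

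One slip should be fixed: in (ii) you conclude that $\pi(\Omega^k(\AAZ))=\AAZ\otimes M_2(\mathbb{C})$ \emph{for every} $k\ge 1$. For $k=1$ this is false and contradicts your own part (i): $\pi(\Omega^1(\AAZ))$ is only the $\AAZ$-span of $\sigma_1,\sigma_2,\sigma_3$, which omits the $I_2$-component. Statement (ii) is meant for $k\ge 2$; your product argument establishes exactly that at $k=2$, and an easy induction (multiplying by one more factor from $\pi(\Omega^1)$ and using that each $\sigma_j$ is invertible) extends it to all $k\ge 2$.
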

\begin{proof} Only (v) was not mentioned in (\cite{CS}). This follows because the space of forms $\Omega^1_D ( \AAZ),\Omega^2_D ( \AAZ)$ are identified with subspaces  of $\AAZ \otimes M_2(\mathbb{C})$ and the multiplication is induced from the multiplication on $\AAZ \otimes M_2(\mathbb{C})$.
\end{proof}
We also recall proposition 14 from \cite{CS}.
\begin{proposition}
If $ 1, \hbar \mu, \hbar \nu$ are  independent over $\mathbb Q$ then the positive linear functional on $\AAZ \otimes M_2({\mathbb C})$ given by
$\tau' : a \mapsto tr_\omega a {|D|}^{-3}$ coincides with $\frac{1}{2} (tr_\omega {|D|}^{-3}) \tau \otimes tr$ where $tr_\omega$ is a Dixmier trace. Thus $\tau'=\frac{1}{2} (tr_\omega {|D|}^{-3}) \widetilde{\tau}$, where $\widetilde{\tau}$, is the trace on $End({\mathcal E})$ used in definition (\ref{YM1}).
\end{proposition}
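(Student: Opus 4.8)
The plan is to establish the identity in four moves: first show that $\tau'$ is a trace on $\AAZ \otimes M_2(\mathbb{C})$; then use the simplicity of the matrix factor to peel off the matrix trace $tr$; next invoke uniqueness of the $G$-invariant trace to identify the remaining functional on $\AAZ$ with a multiple of $\tau$; and finally fix the scalar by evaluating on the identity. The genericity hypothesis that $1,\hbar\mu,\hbar\nu$ be independent over $\mathbb{Q}$ enters precisely at the third step, as it guarantees the ergodicity/simplicity that makes $\tau$ the unique invariant trace.

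First I would verify that $a \mapsto tr_\omega(a\,|D|^{-3})$ is a trace. Using cyclicity of the Dixmier trace and then commuting $a$ past $|D|^{-3}$, the difference $tr_\omega(ab\,|D|^{-3}) - tr_\omega(ba\,|D|^{-3})$ reduces to $tr_\omega\!\big(b\,[\,|D|^{-3},a\,]\big)$. Since every $a \in \AAZ$ is smooth, i.e. $[D,a]$ is bounded, the commutator $[\,|D|^{-3},a\,]$ lies in an ideal strictly smaller than $\mathcal{L}^{1,\infty}$ and is annihilated by any Dixmier trace, so this difference vanishes and $\tau'$ is a trace. Granting this, a purely algebraic computation with matrix units settles the matrix factor: for \emph{any} trace $\phi$ on $\AAZ \otimes M_2(\mathbb{C})$ one checks $\phi(b\otimes e_{jk}) = \delta_{jk}\,\phi(b\otimes e_{11})$, whence $\phi = T \otimes tr$ where $T(b) := \phi(b\otimes e_{11})$ is a trace on $\AAZ$ and $tr$ is the unnormalized matrix trace. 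Applied to $\tau'$ this gives $\tau' = T\otimes tr$ with $T(a) = \tfrac12\, tr_\omega\!\big((a\otimes I_2)\,|D|^{-3}\big)$.

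The heart of the argument, and the step I expect to be the main obstacle, is showing that $T$ is $G$-invariant, so that uniqueness forces $T = c\,\tau$. Writing $U_g = V_g \otimes I_2$ for the implementation of $L_g$ on $L^2(\AAZ,\tau)\otimes\mathbb{C}^2$, invariance of $T$ reduces to showing that the pairing $tr_\omega\!\big((a\otimes I_2)\,|D|^{-3}\big)$ is unchanged under $U_g$. The subtlety is that $D=\sum_j \delta_j\otimes\sigma_j$ is \emph{not} fixed by $U_g$: since $U_g\,\delta_{X}\,U_g^* = \delta_{\mathrm{Ad}(g)X}$ and the adjoint action of the Heisenberg group shears the orthonormal basis $(X_1,X_2,X_3)$ into a non-orthogonal frame, conjugation replaces $D$ by a skewed Dirac operator $D' = \sum_k \delta_k\otimes\big(\sum_j A(g)_{kj}\sigma_j\big)$. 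The resolution is that the Dixmier trace depends only on the principal symbol: the symmetric part of the symbol of $D'^{\,2}$ is the quadratic form $\xi^{T}(AA^{T})\xi\,I_2$, so $tr_\omega\!\big((a\otimes I_2)\,|D'|^{-3}\big)$ differs from the $D$-pairing by the factor $|\det A|^{-1} = |\det \mathrm{Ad}(g)|^{-1}$. Because $G$ is unimodular this determinant is $1$, the two pairings coincide, and $T$ is invariant; uniqueness of the invariant trace then yields $T = c\,\tau$.

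Finally I would pin down $c$ by evaluating on the unit: $\tau'(1\otimes I_2) = tr_\omega|D|^{-3}$, while $(c\,\tau\otimes tr)(1\otimes I_2) = 2c$, so $c = \tfrac12\, tr_\omega|D|^{-3}$. This gives $\tau' = \tfrac12\big(tr_\omega|D|^{-3}\big)\,\tau\otimes tr$, and since $\tau\otimes tr$ is exactly the trace $\widetilde{\tau}$ on $End(\mathcal{E})$ appearing in Definition \ref{YM1}, the asserted identity follows. I expect every step other than the invariance above to be routine; the genuine work lies in controlling $|D|^{-3}$ under the non-isometric adjoint action, which is exactly what the unimodularity of $G$ supplies. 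As an alternative one could bypass this by diagonalizing $D^2$ directly — on each $p$-sector the operator $\delta_1^2+\delta_2^2$ is of harmonic-oscillator (Landau-level) type because of the term $2\pi i cpx$ in $d_2$ — and computing the Dixmier trace from the resulting eigenvalue asymptotics, but the conceptual route via uniqueness of the trace is cleaner.
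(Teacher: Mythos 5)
Note first that the paper itself contains no proof of this proposition: it is quoted verbatim as Proposition 14 of \cite{CS}, so the comparison is with the argument given there. Your skeleton --- (1) $\tau'$ is a trace, (2) every trace on $\AAZ\otimes M_2(\mathbb{C})$ factors as $T\otimes tr$ by the matrix-unit computation, (4) fix the constant at the unit --- coincides with that argument; the genuine gap is your step (3). To get $G$-invariance of $T$ you assert that ``the Dixmier trace depends only on the principal symbol,'' so that replacing $D$ by $D'=U_g^*DU_g=\sum_k(\delta_k+c_k\delta_3)\otimes\sigma_k$ (with $c_3=0$) changes the pairing only by $|\det \mathrm{Ad}(g)|^{-1}=1$. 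That assertion is Connes' trace theorem, a statement about classical pseudodifferential operators on a compact manifold. Here the operators are built from the noncommuting derivations $\delta_j$ acting on $L^2(\AAZ,\tau)\otimes\mathbb{C}^2$; no pseudodifferential calculus or trace theorem for the QHM exists for you to invoke, and nothing you have established implies the claim. Nor can the discrepancy be absorbed into a negligible term: $D'^2-D^2$ contains the second-order pieces $2c_k\delta_k\delta_3$, of the same order as $D^2$ itself, so $|D'|^{-3}-|D|^{-3}$ is not disposed of by any soft commutator estimate --- the equality of the two pairings is exactly as strong as the change-of-variables statement you are trying to import. Making step (3) rigorous would mean either building that missing symbol calculus or redoing the explicit spectral analysis of \cite{CS} for the whole sheared family $D'$; as written it is a heuristic, not a proof.

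The second point is that the detour is unnecessary, and a misreading of the hypothesis is what led you into it. Ergodicity of the Heisenberg action, hence uniqueness of the \emph{invariant} trace, holds for \emph{all} parameter values, so if unique invariant trace sufficed, the hypothesis that $1,\hbar\mu,\hbar\nu$ be independent over $\mathbb{Q}$ would be vacuous. That hypothesis is precisely the condition under which the QHM is known to have a unique tracial state outright (a result of Abadie, which is what the proof in \cite{CS} rests on). Granted your step (1), this finishes the proof with no invariance argument at all: $T(b)=\frac{1}{2}tr_\omega((b\otimes I_2)|D|^{-3})$ is a positive trace on $\AAZ$, it is automatically norm-bounded because $\AAZ$ is stable under holomorphic functional calculus (for $a=a^*$ and $\epsilon>0$, $(\|a\|+\epsilon)1-a$ is a square in $\AAZ$), hence extends to a tracial state on the $C^*$-algebra after normalization, and uniqueness gives $T=T(1)\tau$ at once. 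Finally, your step (1) is itself glossed: killing $tr_\omega\bigl(a[b,|D|^{-3}]\bigr)$ requires $[b,|D|^{-3}]$ to be trace class, and via $[b,|D|^{-3}]=|D|^{-3}[|D|^3,b]|D|^{-3}$ this needs boundedness of $[|D|,b]$ --- a regularity property of the spectral triple --- not merely of $[D,b]$, which in general does not imply it; for the QHM triple this is true but must be proved.
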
 
\begin{proposition} \label{differential} \rm \hfill
\begin{enumerate}
\item[(i)] The differential $\tilde{d}: \AAZ \rightarrow \Omega_D^1(\AAZ)$ satisfies $\tilde{d}(a)=\sum_{j=1}^3 \delta_j(a) \otimes \sigma_j.$
\item[(ii)] The differential $\tilde{d}: \Omega_D^1(\AAZ) \rightarrow \Omega_D^2(\AAZ)$ satisfies
 \begin{eqnarray}
 \tilde{d}(a\otimes \sigma_1)&=&\sum_{j=2,3} \delta_j(a) \otimes \sigma_j \sigma_1,\\
  \tilde{d}(a\otimes \sigma_2)&=&\sum_{j=1,3} \delta_j(a) \otimes \sigma_j \sigma_2,\\
  \label{derivation-I-3}
   \tilde{d}(a\otimes \sigma_3)&=& \delta_1(a) \otimes \sigma_1 \sigma_3 + \delta_2(a) \otimes \sigma_2 \sigma_3 -\frac{i}{\alpha}a \otimes \sigma_1 \sigma_2.
\end{eqnarray}
\end{enumerate}
\end{proposition}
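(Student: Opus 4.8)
The plan is to extract both differentials directly from the definition of the quotient complex $\Omega^\bullet_D(\AAZ)$, using the commutator formula $[D,a]=\sum_{j}\delta_j(a)\otimes\sigma_j$ of (\ref{Dirac-I}) together with the bimodule and product descriptions of Proposition \ref{recall-I}. Part (i) is immediate: since $\pi$ is faithful on $\AAZ$ we have $J_0=\ker\pi|_{\Omega^0}=0$, so $\delta J_0=0$ and $\Omega^1_D(\AAZ)=\pi(\Omega^1(\AAZ))$ carries no quotient. Hence the induced differential is $\tilde d(a)=\pi_D(\delta a)=\pi(\delta a)=[D,a]$, which is exactly $\sum_{j=1}^3\delta_j(a)\otimes\sigma_j$.

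For part (ii) I would compute $\tilde d$ on a one-form by lifting it to the universal calculus and transporting $\delta$ across the chain map $\pi_D$. Because $\Omega^1_D(\AAZ)=\pi(\Omega^1(\AAZ))$, every one-form $a\otimes\sigma_m$ can be written as $\pi(\omega)$ with $\omega=\sum_i a_0^i\,\delta a_1^i$; expanding $\pi(\omega)=\sum_k\bigl(\sum_i a_0^i\,\delta_k(a_1^i)\bigr)\otimes\sigma_k$ and comparing the linearly independent components $\sigma_k$ gives the constraints $\sum_i a_0^i\,\delta_k(a_1^i)=\delta_{km}\,a$ for $k=1,2,3$. Since $\tilde d$ is induced by $\delta$ and $\delta\omega=\sum_i\delta a_0^i\,\delta a_1^i$, we obtain $\tilde d(a\otimes\sigma_m)=\pi_D(\delta\omega)$ with $\pi(\delta\omega)=\sum_i[D,a_0^i][D,a_1^i]=\sum_{l,k}\bigl(\sum_i\delta_l(a_0^i)\delta_k(a_1^i)\bigr)\otimes\sigma_l\sigma_k$. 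In $\Omega^2_D(\AAZ)$ the diagonal terms $l=k$ vanish by the product rule of Proposition \ref{recall-I}(v) (they lie in $\pi(\delta J_1)=\AAZ\otimes I_2$).

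Next I would differentiate the constraints: applying the derivation $\delta_l$ and using the Leibniz rule yields $\sum_i\delta_l(a_0^i)\delta_k(a_1^i)=\delta_{km}\,\delta_l(a)-\sum_i a_0^i\,\delta_l\delta_k(a_1^i)$. Summing the first summand over $l\neq k$ produces exactly the main term $\sum_{l\neq m}\delta_l(a)\otimes\sigma_l\sigma_m$, which already accounts for the whole of the $m=1,2$ formulas and for the first two terms of the $m=3$ formula. It then remains to show that the second-order remainder $-\sum_{l\neq k}\bigl(\sum_i a_0^i\,\delta_l\delta_k(a_1^i)\bigr)\otimes\sigma_l\sigma_k$ collapses correctly.

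The crux is precisely this last cancellation. Grouping each ordered pair $(l,k)$ with its transpose $(k,l)$ and using the anticommutation $\sigma_k\sigma_l=-\sigma_l\sigma_k$ for $l\neq k$ converts the remainder into $-\sum_{l<k}\bigl(\sum_i a_0^i\,[\delta_l,\delta_k](a_1^i)\bigr)\otimes\sigma_l\sigma_k$. By the bracket relations (\ref{delta-bracket}) the only surviving pair is $(1,2)$ with $[\delta_1,\delta_2]=-\frac{i}{\alpha}\delta_3$, so the remainder reduces to a multiple of $\bigl(\sum_i a_0^i\,\delta_3(a_1^i)\bigr)\otimes\sigma_1\sigma_2$; by the constraint this equals a multiple of $\delta_{3m}\,a\otimes\sigma_1\sigma_2$ and is therefore present only when $m=3$, where it supplies precisely the $\frac{1}{\alpha}$-correction term of (\ref{derivation-I-3}). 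The main obstacle is exactly this bookkeeping: one must verify that all second-order contributions either cancel against their transposes or are governed by the Lie-algebra relations, so that the output depends only on $a\otimes\sigma_m$ and not on the chosen lift $\omega$. That last independence is automatic, since two lifts differ by an element of $J_1$ whose image under $\pi\circ\delta$ lands in $\pi(\delta J_1)$ and is killed in $\Omega^2_D(\AAZ)$; I would invoke this to justify evaluating $\tilde d$ on any convenient representative.
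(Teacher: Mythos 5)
Your proof is correct, but it follows a genuinely different route from the paper's. For the $\sigma_1$ and $\sigma_2$ cases the paper constructs \emph{explicit} lifts using the eigenfunctions $\phi_{10},\phi_{01}$ (where $\phi_{mn}(x,y,p)=e(mx+ny)\delta_{p0}$, with $\delta_1(\phi_{10})=2\pi\phi_{10}$, $\delta_2(\phi_{01})=2\pi\phi_{01}$ and all other $\delta_j$'s vanishing on them): one writes $a\otimes\sigma_1=\pi_D\bigl(\frac{1}{2\pi}a\phi_{10}^*\delta(\phi_{10})\bigr)$ and applies $\delta$ to this particular representative. For $\sigma_3$ no such eigenfunction exists ($\delta_3$ annihilates everything supported at $p=0$), so the paper instead computes $\tilde{d}$ only on elements of the special form $a\delta_3(b)\otimes\sigma_3$ --- by subtracting the already-established $\sigma_1,\sigma_2$ formulas from $\tilde{d}(\pi_D(a\delta b))=\pi_D(\delta a\,\delta b)$ --- and then upgrades to arbitrary elements by observing that the span of the $a\delta_3(b)$ is a nonzero ideal and $\AAZ$ is simple. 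Your argument needs neither the eigenfunctions nor the simplicity of the algebra: you take an arbitrary lift $\omega$ of $a\otimes\sigma_m$, extract the constraints $\sum_i a_0^i\delta_k(a_1^i)=\delta_{km}a$, and obtain all three formulas simultaneously from the Leibniz rule, the anticommutation of the $\sigma_j$'s and the brackets (\ref{delta-bracket}), with representative-independence correctly disposed of via $\pi(\delta J_1)$. Your route is more uniform and isolates exactly what the computation depends on (it would apply verbatim to any algebra with this form structure, simple or not); the paper's route is more concrete but case-by-case and specific to the QHM.

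One point you should state explicitly rather than gloss over: your remainder evaluates to $+\frac{i}{\alpha}\,\delta_{3m}\,a\otimes\sigma_1\sigma_2$, with a \emph{plus} sign, whereas (\ref{derivation-I-3}) as printed reads $-\frac{i}{\alpha}a\otimes\sigma_1\sigma_2$; so your computation does not ``supply precisely'' the stated term. This is not a flaw in your argument: the paper's own proof likewise ends with $+\frac{i}{\alpha}\,a\delta_3(b)\otimes\sigma_1\sigma_2$ (via $[\delta_2,\delta_1]=\frac{i}{\alpha}\delta_3$), and the plus sign is the one consistent with the corollary that follows, $\tilde{d}(1\otimes\sigma_3)=-\frac{1}{\alpha}\otimes\sigma_3$ (using $\sigma_1\sigma_2=i\sigma_3$), as well as with the curvature formula used in the final theorem. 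The sign in the displayed statement (\ref{derivation-I-3}) is evidently a typo; your write-up should note the discrepancy instead of silently identifying $+\frac{i}{\alpha}$ with $-\frac{i}{\alpha}$.
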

\begin{proof} \rm (i) This follows from $\tilde{d}(a)=[D,a]=\sum_j \delta_j(a) \otimes \sigma_j$.

(ii) The differential $\tilde{d}: \Omega_D^1(\AAZ) \rightarrow \Omega_D^2(\AAZ)$ is defined in such a way that the following diagram commutes.
\begin{center}
\begin{tikzpicture}[node distance=3cm,auto]
\node (Up)[label=above:$\pi_D$]{};
\node (A)[node distance=1.5cm,left of=Up]{$\Omega^1 ( \mathcal{\AAZ} )$};
\node (B)[node distance=1.5cm,right of=Up]{$\Omega^1_D(\mathcal{\AAZ})$};
\node (Down)[node distance=2cm,below of=Up, label=below:$\pi_D$]{};
\node(C)[node distance=1.5cm,left of=Down]{$\Omega^{2} ( \mathcal{\AAZ} )$};
\node(D)[node distance=1.5cm,right of=Down]{$\Omega^{2}_D(\mathcal{\AAZ})$};
\draw[->](A) to (B);
\draw[->](C) to (D);
\draw[->](B)to node{{ $\tilde{d}$}}(D);
\draw[->](A)to node[swap]{{ $\delta$}}(C);
\end{tikzpicture} 
\end{center}
Therefore to  see how it acts on an element of $\Omega^1_D(\mathcal{\AAZ})$ we pick an element and lift that to $\Omega^1(\mathcal{\AAZ})$ and then follow the diagram. Let $\phi_{mn} \in S^c$ be the function $\phi_{m,n}(x,y,p)=e(mx+ny)\delta_{p0}$. These functions are eigenfunctions for $\delta_j$'s and satisfy
\begin{align*}
 \delta_1(\phi_{10})&=2 \pi \phi_{10},& \delta_2 (\phi_{10})&=0,& \delta_3(\phi_{10})&=0,\\
\delta_1(\phi_{01})&=0,& \delta_2 (\phi_{10})&=2 \pi \phi_{01},& \delta_3(\phi_{01})&=0.
\end{align*}
Let $\tilde{a}=\frac{1}{2\pi} a \phi_{10}^* \delta (\phi_{10}) \in \Omega^1$, then,
\begin{align*}
 \pi_D(\tilde{a}) & =  \frac{1}{2\pi} (a \phi_{10}^*\otimes I_2)(\sum_{j=1}^3 \delta_j(\phi_{10}) \otimes \sigma_j))\\
& =  \frac{1}{2\pi} (a \phi_{10}^* 2 \pi \phi_{10}) \otimes \sigma_1\\
& =  a \otimes \sigma_1.\\
\intertext{Therefore,}
 \tilde{d}(a \otimes \sigma_1) & =   \pi_D(\delta(\tilde{a}))\\
& =  \frac{1}{2 \pi}(\sum_{j=1}^3 \delta_j(a \phi_{10}^*) \otimes \sigma_j) ( 2 \pi \phi_{10} \otimes \sigma_1)\\
& =  \sum_{j \ne 1} \delta_j(a) \otimes \sigma_j \sigma_1.\\
\intertext{Similarly}
\tilde{d}(a \otimes \sigma_2) & = \sum_{j \ne 2} \delta_j(a) \otimes \sigma_j \sigma_2.\\
\end{align*}
To see (\ref{derivation-I-3}) observe that,
\begin{eqnarray*}
 \tilde{d}(a \delta_3(b)\otimes \sigma_3) & = & \tilde{d}(a \sum_j \delta_j(b)\otimes \sigma_j)-\tilde{d}(a \delta_1(b)\otimes \sigma_1)-\tilde{d}(a \delta_2(b)\otimes \sigma_2)\\
& = & \tilde{d}(\pi_D(a \delta(b)))-\tilde{d}(a \delta_1(b)\otimes \sigma_1)-\tilde{d}(a \delta_2(b)\otimes \sigma_2)\\
& = & \pi_D(\delta(a)\delta(b))-\tilde{d}(a \delta_1(b)\otimes \sigma_1)-\tilde{d}(a \delta_2(b)\otimes \sigma_2)\\
& = & \sum (\delta_j(a) \otimes \sigma_j ) (\delta_k(a) \otimes \sigma_k ) -\tilde{d}(a \delta_1(b)\otimes \sigma_1)-\tilde{d}(a \delta_2(b)\otimes \sigma_2) \mbox { mod } \pi(\delta J_1)\\
& = & \delta_1(a) \delta_3 (b) \otimes \sigma_1 \sigma_3 +\delta_2(a) \delta_3 (b) \otimes \sigma_2 \sigma_3 +a[\delta_2,\delta_1](b) \otimes \sigma_1 \sigma_2 \\
&& -a \delta_3 (\delta_1(b)) \otimes \sigma_3 \sigma_1 -a \delta_3 (\delta_2(b)) \otimes \sigma_3 \sigma_2\\
& = & \delta_1(a \delta_3(b)) \otimes \sigma_1 \sigma_3 +
\delta_2(a \delta_3(b)) \otimes \sigma_2 \sigma_3 +a[\delta_2,\delta_1](b) \otimes \sigma_1 \sigma_2 \\
&=&  \delta_1(a \delta_3(b)) \otimes \sigma_1 \sigma_3 +
\delta_2(a \delta_3(b)) \otimes \sigma_2 \sigma_3
+\frac{i}{\alpha} a \delta_3(b) \otimes \sigma_1 \sigma_2.
\end{eqnarray*}
The last equality uses $[\delta_2,\delta_1]=\frac{i}{\alpha}  \delta_3.$
Since span of elements of the form $a\delta_3(b)$ forms an ideal in $\AAZ$ and $\AAZ$ is simple, (\ref{derivation-I-3}) follows.
\end{proof}
\begin{corollary}
$\tilde{d}(1 \otimes \sigma_j)= \begin{cases} 0 & \mbox { if } j = 1,2;\\
\frac{
-1}{\alpha}  \otimes \sigma_3 & \mbox{ if } j=3. \end{cases} $
\end{corollary}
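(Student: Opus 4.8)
The plan is to obtain the corollary as an immediate specialization of Proposition \ref{differential}(ii) to $a=1$, the unit of $\AAZ$. First I would identify the unit explicitly: from the twisted convolution product (\ref{1}) one checks that $\mathbf{1}(x,y,p)=\delta_{p,0}$ (constant in $x,y$ and supported at $p=0$) is the identity, since in $\mathbf{1}\star\Psi$ only the $q=0$ summand survives and returns $\Psi$. This function lies in $S^c$, as the covariance condition $\mathbf{1}(x+k,y,p)=e(ckpy)\mathbf{1}(x,y,p)$ holds trivially on the support $p=0$.

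Next I would record that every derivation annihilates the unit. Writing $\delta_j=i\,d_j$ and using the explicit formulas (\ref{derivation-I})--(\ref{derivation-III}): the operator $d_1=-\partial_x$ kills the $x$-independent $\mathbf{1}$; in $d_2$ both the $\partial_y$-term and the factor $p$ in $2\pi i c p x$ vanish on $\mathbf{1}$, the latter because $p\,\delta_{p,0}=0$; and $d_3$ is multiplication by $2\pi i p c\alpha$, again killed by $p\,\delta_{p,0}=0$. Hence $\delta_j(\mathbf{1})=0$ for $j=1,2,3$.

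With these two facts the computation is mechanical. Substituting $a=\mathbf{1}$ into the three formulas of Proposition \ref{differential}(ii), every summand for $j=1,2$ carries a factor $\delta_k(\mathbf{1})$ and hence vanishes, giving $\tilde d(\mathbf{1}\otimes\sigma_1)=\tilde d(\mathbf{1}\otimes\sigma_2)=0$. For $j=3$ the two derivative terms $\delta_1(\mathbf{1})\otimes\sigma_1\sigma_3$ and $\delta_2(\mathbf{1})\otimes\sigma_2\sigma_3$ drop out, leaving only the anomalous term proportional to $\mathbf{1}\otimes\sigma_1\sigma_2$; converting it to a multiple of $\sigma_3$ via the multiplication rule $\sigma_1\sigma_2=i\sigma_3$ produces the stated value $-\tfrac{1}{\alpha}\,\mathbf{1}\otimes\sigma_3$.

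The only delicate point is the sign of the $1/\alpha$ coefficient in this last step. It is the product of two compounding signs: the factor $i\cdot i=-1$ coming from $\sigma_1\sigma_2=i\sigma_3$, and the orientation of the structure-constant term, which traces back to $[\delta_2,\delta_1]=\tfrac{i}{\alpha}\delta_3$ (equivalently (\ref{delta-bracket})) as used inside the proof of Proposition \ref{differential}. I would therefore read the coefficient of the surviving $\sigma_1\sigma_2$ term directly from that proof, so as to combine the two factors of $i$ consistently and land on $-1/\alpha$ rather than $+1/\alpha$.
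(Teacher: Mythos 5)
Your proof is correct and is exactly the paper's (implicit) argument: the corollary is Proposition \ref{differential}(ii) evaluated at $a=1$, using $\delta_j(1)=0$ and $\sigma_1\sigma_2=i\sigma_3$. Your ``delicate point'' is real and you resolved it the right way: the coefficient $-\frac{i}{\alpha}$ printed in (\ref{derivation-I-3}) is a sign misprint --- taken literally it would give $+\frac{1}{\alpha}\otimes\sigma_3$ --- whereas the proof of that proposition actually derives $+\frac{i}{\alpha}\,a\otimes\sigma_1\sigma_2$ from $[\delta_2,\delta_1]=\frac{i}{\alpha}\delta_3$, which is the sign forced by $\tilde{d}^2=0$ (and used again in the theorem's proof), and which yields the stated $-\frac{1}{\alpha}\otimes\sigma_3$.
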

Now we have all the ingredients to describe the space $\widetilde{C}({\mathcal E})$ of compatible connections on a finitely 
generated projective $\AAZ$-module ${\mathcal E}$ with a Hermitian structure.
\begin{proposition}\label{compatible-connection-II} Let $\mathcal E$ be a finitely generated projective $\AAZ$ module. Then  the space
 $\widetilde{C}(\mathcal{E})$ 
of compatible  connections is given by triples of linear maps $\widetilde{\nabla}_j : \mathcal{E} \rightarrow \mathcal {E}, j=1,2,3 $ such that
\begin{align} \label{compatible-connection-II1}
\widetilde{\nabla}_j(\xi.a) &= \nabla_j(\xi).a+ \xi. \delta_j(a),& \, &j=1,2,3 \\
\label{compatible-connection-II2} \delta_j (\langle\, \xi , \xi'\, \rangle_{\mathcal{A}})&=  \langle \xi\, , \widetilde{\nabla}_j \, 
\xi' \rangle-\langle \widetilde{\nabla}_j \, \xi\, , \xi' \rangle \,  \,
,&    \forall \, \xi\, , 
\xi' \in \mathcal{E}, &j=1,2,3 \, .
\end{align}
\end{proposition}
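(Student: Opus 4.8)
The plan is to exploit the fact, recorded in Proposition \ref{recall-I}(i), that $\Omega_D^1(\AAZ)$ is free over $\AAZ$ with $\{\sigma_1,\sigma_2,\sigma_3\}$ serving as a basis, together with the observation that these generators commute with the image of $\AAZ$ (since $\AAZ$ is represented as $\AAZ\otimes I_2$ inside $\AAZ\otimes M_2(\mathbb{C})$), so that $a\,\sigma_j=\sigma_j\,a=a\otimes\sigma_j$ for every $a\in\AAZ$. Consequently the balanced tensor product splits as $\mathcal{E}\otimes_{\AAZ}\Omega_D^1\cong\mathcal{E}\oplus\mathcal{E}\oplus\mathcal{E}$, and any $\widetilde{\nabla}:\mathcal{E}\to\mathcal{E}\otimes_{\AAZ}\Omega_D^1$ can be written uniquely as $\widetilde{\nabla}(\xi)=\sum_{j=1}^3\widetilde{\nabla}_j(\xi)\otimes\sigma_j$ with each $\widetilde{\nabla}_j:\mathcal{E}\to\mathcal{E}$ linear. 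The proposition then reduces to translating conditions (a) and (b) of the spectral-triple connection into statements about the components $\widetilde{\nabla}_j$ by comparing $\sigma_j$-coefficients, exactly paralleling the proof of Proposition \ref{compatible-connection-I}.

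First I would treat the Leibniz rule, condition (a). Using Proposition \ref{differential}(i), $\tilde{d}a=\sum_j\delta_j(a)\otimes\sigma_j$, so the term $\xi\otimes\tilde{d}a$ becomes $\sum_j(\xi.\delta_j(a))\otimes\sigma_j$ after pulling the coefficient $\delta_j(a)$ across the balanced tensor. Likewise $(\widetilde{\nabla}\xi)a=\sum_j(\widetilde{\nabla}_j(\xi)\otimes\sigma_j)a=\sum_j(\widetilde{\nabla}_j(\xi).a)\otimes\sigma_j$, moving $a$ from the right action on $\Omega_D^1$ across the tensor via $\sigma_j a=a\sigma_j$. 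Comparing the coefficient of each $\sigma_j$ in $\widetilde{\nabla}(\xi a)=(\widetilde{\nabla}\xi)a+\xi\otimes\tilde{d}a$ yields precisely \eqref{compatible-connection-II1}.

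Next I would handle the compatibility condition (b). Expanding the pairing according to its defining formula and using that the $\sigma_j$ are self-adjoint, I get $(\widetilde{\nabla}\xi,\eta)=\sum_j\sigma_j^*\langle\widetilde{\nabla}_j\xi,\eta\rangle_{\AAZ}=\sum_j\langle\widetilde{\nabla}_j\xi,\eta\rangle_{\AAZ}\otimes\sigma_j$ and, symmetrically, $(\xi,\widetilde{\nabla}\eta)=\sum_j\langle\xi,\widetilde{\nabla}_j\eta\rangle_{\AAZ}\otimes\sigma_j$. Since the right-hand side $\tilde{d}\langle\xi,\eta\rangle_{\AAZ}=\sum_j\delta_j(\langle\xi,\eta\rangle_{\AAZ})\otimes\sigma_j$ is already displayed in components, comparing coefficients of $\sigma_j$ in condition (b) gives, after setting $\eta=\xi'$, exactly \eqref{compatible-connection-II2}. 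The converse follows by reversing these computations: given linear maps $\widetilde{\nabla}_j$ satisfying \eqref{compatible-connection-II1} and \eqref{compatible-connection-II2}, the formula $\widetilde{\nabla}(\xi)=\sum_j\widetilde{\nabla}_j(\xi)\otimes\sigma_j$ defines a map into $\mathcal{E}\otimes_{\AAZ}\Omega_D^1$ satisfying (a) and (b).

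I expect the only genuine subtlety, rather than a real obstacle, to be the careful bookkeeping of the left and right $\AAZ$-actions on $\Omega_D^1$ together with the adjoint convention in the pairing. In particular one must verify at the outset that each $\sigma_j$ is central for the $\AAZ$-bimodule structure, so that the balanced tensor product genuinely splits into three copies of $\mathcal{E}$ and the coefficient-matching argument is legitimate; and one must keep track of the self-adjointness $\sigma_j^*=\sigma_j$, which is what makes the two pairings $(\widetilde{\nabla}\xi,\eta)$ and $(\xi,\widetilde{\nabla}\eta)$ land in the same copy $\AAZ\otimes\sigma_j$ so that (b) can be read off coefficientwise. Once these points are settled, the argument is entirely formal.
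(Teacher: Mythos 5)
Your proposal is correct and takes essentially the same route as the paper's own proof: both identify $\mathcal{E}\otimes_{\AAZ}\Omega_D^1(\AAZ)$ with three copies of $\mathcal{E}$ indexed by the $\sigma_j$ (via Proposition \ref{recall-I}), write $\widetilde{\nabla}(\xi)=\sum_{j=1}^3\widetilde{\nabla}_j(\xi)\otimes\sigma_j$, and obtain \eqref{compatible-connection-II1} and \eqref{compatible-connection-II2} by comparing $\sigma_j$-coefficients in the Leibniz rule and the compatibility condition. Your explicit checks that the $\sigma_j$ are central for the bimodule structure and self-adjoint merely spell out bookkeeping the paper leaves implicit.
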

\begin{proof} 
By proposition (\ref{recall-I}) we can identify ${\mathcal E}\otimes_{\AAZ} \Omega_D^1(\AAZ)$ with the 
subspace $\sum_j {\mathcal E} \otimes \sigma_j \subseteq {\mathcal E} \otimes M_2({\mathbb C})$. Thus any compatible connection 
$\widetilde{\nabla}$ is prescribed by 
three maps $\widetilde{\nabla}_j : {\mathcal E} \rightarrow {\mathcal E}$ such that 
\[\widetilde{\nabla}(\xi)=\sum_{j=1}^3 \widetilde{\nabla}_j(\xi) \otimes \sigma_j.\]
Then
\begin{eqnarray*}
\widetilde{\nabla}(\xi.a) &=&  \sum_{j=1}^3  \widetilde{\nabla}_j(\xi.a) \otimes \sigma_j \\
&=&  \widetilde{\nabla}(\xi).a + \xi \otimes \widetilde{d}(a) \\
&=&  \sum_{j=1}^3  \widetilde{\nabla}_j(\xi).a \otimes \sigma_j + \sum_{j=1}^3 \xi.\delta_j(a) \otimes \sigma_j .\\
\end{eqnarray*}
Thus comparing coefficients of $\sigma_j$ we get,
\[\widetilde{\nabla}_j(\xi.a) = \nabla_j(\xi).a+ \xi. \delta_j(a), \, j=1,2,3.\]
For (\ref{compatible-connection-II2}) note that,
\begin{eqnarray*}
\sum_{j=1}^3 \delta_j(\langle \xi, \xi' \rangle ) \otimes \sigma_j &=& \widetilde{d}\langle \xi,\xi' \rangle) \\
&=& (\xi, \widetilde{\nabla}\xi')-(\widetilde{\nabla} \xi, \xi') \\
 &=&\sum_{j=1}^3 (\langle \xi , \widetilde{\nabla}_j \xi' \rangle-\langle \widetilde{\nabla}_j \, \xi\, , \xi' \rangle) \otimes \sigma_j . \\
\end{eqnarray*}
\end{proof}
\begin{theorem} \rm Let $\mathcal{E}$ be a finitely generated projective $\AAZ$ module with a Hermitian structure. Then 
$\Phi:C(\mathcal{E}) \rightarrow \widetilde{C}({\mathcal{E}})$ given by $\Phi(\nabla)=\widetilde{\nabla}$, where 
$\widetilde{\nabla}(\xi)=i\nabla(\xi)$ is well defined and $$\frac{1}{2} (tr_\omega {|D|}^{-3})\rm{YM}(\nabla)=\widetilde{\rm{YM}}(\Phi(\nabla)).$$
\end{theorem}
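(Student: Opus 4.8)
The plan is to work entirely through the component descriptions of the two connection spaces given in Propositions \ref{compatible-connection-I} and \ref{compatible-connection-II}, so that in both pictures a connection is just a triple of linear maps $\nabla_j$ (resp.\ $\widetilde\nabla_j$) on $\mathcal E$. First I would check that $\widetilde\nabla_j:=i\nabla_j$ lands in $\widetilde C(\mathcal E)$. The Leibniz rule (\ref{compatible-connection-II1}) is immediate: multiply (\ref{compatible-connection-I1}) by $i$ and use $\delta_j=i\,d_j$ from (\ref{Dirac-I}). For the compatibility relation (\ref{compatible-connection-II2}) the key is that the Hermitian product is conjugate–linear in its first slot and linear in its second (Definition \ref{Hermitian-1}(a),(b)), so that $\langle i\nabla_j\xi,\xi'\rangle=-i\langle\nabla_j\xi,\xi'\rangle$ while $\langle\xi,i\nabla_j\xi'\rangle=i\langle\xi,\nabla_j\xi'\rangle$. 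Hence the right–hand side of (\ref{compatible-connection-II2}) equals $i\big(\langle\nabla_j\xi,\xi'\rangle+\langle\xi,\nabla_j\xi'\rangle\big)=i\,d_j\langle\xi,\xi'\rangle=\delta_j\langle\xi,\xi'\rangle$ by the dynamical compatibility (\ref{compatible-connection-I2}); the sign reversal coming from conjugate–linearity is exactly what turns the symmetric relation into the skew one. Since multiplication by $i$ is invertible, $\Phi$ is in fact a bijection.

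Next I would compute the curvature $\varTheta=\widetilde\nabla\circ\widetilde\nabla$ through the components $\widetilde\nabla_j$. Writing $\widetilde\nabla\xi=\sum_j\widetilde\nabla_j(\xi)\otimes\sigma_j$ and applying the extension rule (\ref{extended-connection}) together with the product law of Proposition \ref{recall-I}(v) — whose factor $(1-\delta_{jk})$ kills the diagonal products $\sigma_j\sigma_j$ — one folds the six off–diagonal terms (using $\sigma_j\sigma_k=-\sigma_k\sigma_j$) into three commutators attached to $\sigma_2\sigma_3,\ \sigma_3\sigma_1,\ \sigma_1\sigma_2$. The only form with nonzero differential is $\sigma_3$, and by the Corollary following Proposition \ref{differential} the summand $\widetilde\nabla_3(\xi)\otimes\tilde d\sigma_3$ contributes a multiple of $\tfrac1\alpha\widetilde\nabla_3(\xi)\otimes\sigma_1\sigma_2$; this is precisely where the relation $[X_1,X_2]=-\tfrac1\alpha X_3$ of (\ref{Lie}) re-enters. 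The result is
\begin{align*}
\varTheta(\xi)={}&[\widetilde\nabla_2,\widetilde\nabla_3]\xi\otimes\sigma_2\sigma_3+[\widetilde\nabla_3,\widetilde\nabla_1]\xi\otimes\sigma_3\sigma_1\\
&{}+\big([\widetilde\nabla_1,\widetilde\nabla_2]+\tfrac{i}{\alpha}\widetilde\nabla_3\big)\xi\otimes\sigma_1\sigma_2.
\end{align*}
Substituting $\widetilde\nabla_j=i\nabla_j$, each coefficient acquires a factor $i^2=-1$, and the three $\mathrm{End}(\mathcal E)$–components $\varTheta_k$ become, up to an overall sign, exactly the dynamical field strengths $[\nabla_{X_1},\nabla_{X_3}]$, $[\nabla_{X_2},\nabla_{X_3}]$ and $[\nabla_{X_1},\nabla_{X_2}]+\tfrac1\alpha\nabla_{X_3}$ of (\ref{YM2}); in particular $\varTheta_k^{\,2}$ reproduces each squared field strength there.

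Finally I would assemble the functional. Because the two–forms $\sigma_2\sigma_3,\sigma_3\sigma_1,\sigma_1\sigma_2$ are mutually orthogonal in $\Omega_D^2$ and $\widetilde{\mathcal H}_2=\overline{\pi(\delta J_1)}=\overline{\AAZ\otimes I_2}$ (Proposition \ref{recall-I}(iii)) is orthogonal to all of them, the projection $P$ acts trivially and Remark \ref{inner-prod} applies verbatim with $n=3$, giving $\widetilde{YM}(\Phi(\nabla))=\langle\langle\varTheta,\varTheta\rangle\rangle=\sum_k\tau'\!\otimes\!\mathrm{Trace}\,(\varTheta_k^{\,*}\varTheta_k)$. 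For a compatible connection the field strengths are skew–adjoint — the standard consequence of (\ref{compatible-connection-I2}) that makes $YM$ nonnegative — so $\varTheta_k^{\,*}\varTheta_k=-\varTheta_k^{\,2}$, and hence $\sum_k\varTheta_k^{\,*}\varTheta_k$ equals minus the sum of squared field strengths, i.e.\ $\widetilde\tau\big(\sum_k\varTheta_k^{\,*}\varTheta_k\big)=YM(\nabla)$ by the definition (\ref{YM2}). Invoking the recalled identity $\tau'=\tfrac12(tr_\omega{|D|}^{-3})\,\widetilde\tau$ then pulls the scalar out of the trace and yields $\widetilde{YM}(\Phi(\nabla))=\tfrac12(tr_\omega{|D|}^{-3})\,YM(\nabla)$.

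The main obstacle is the curvature computation of the second paragraph: one must apply the extended Leibniz rule (\ref{extended-connection}) and the nontrivial differential $\tilde d\sigma_3$ correctly, track the anticommutation $\sigma_j\sigma_k=-\sigma_k\sigma_j$ when combining pairs into commutators, and then reconcile every sign produced by the substitution $\widetilde\nabla_j=i\nabla_j$ against the $\tfrac1\alpha$–correction dictated by (\ref{Lie}). A secondary but essential point is the scalar bookkeeping: one has to confirm the orthonormality of the spinor two–forms under the inner product (\ref{inner-product}) and the passage from the Dixmier–trace functional $\tau'$ to the module trace $\widetilde\tau$, since it is exactly this normalization that is responsible for the factor $\tfrac12(tr_\omega{|D|}^{-3})$ in the statement.
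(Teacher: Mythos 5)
Your proposal is correct and follows essentially the same route as the paper: the component-wise verification that $\Phi$ is well defined, the computation of $\varTheta=\widetilde\nabla\circ\widetilde\nabla$ via the extension rule (\ref{extended-connection}), Proposition \ref{recall-I}(v) and the Corollary after Proposition \ref{differential} (your expression in the basis $\sigma_2\sigma_3,\sigma_3\sigma_1,\sigma_1\sigma_2$ coincides with the paper's formula in the basis $\sigma_1,\sigma_2,\sigma_3$, since $\sigma_2\sigma_3=i\sigma_1$ etc.), and the evaluation of $\langle\langle\varTheta,\varTheta\rangle\rangle$ through Remark \ref{inner-prod} and the trace identity $\tau'=\tfrac12(tr_\omega{|D|}^{-3})\widetilde\tau$. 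The only difference is organizational: the paper keeps the $\widetilde\nabla_j$ throughout and derives the required skew/self-adjointness of the curvature components explicitly from (\ref{compatible-connection-II2}) in equations (\ref{id1})--(\ref{nabla*}), substituting $\widetilde\nabla_j=i\nabla_j$ only at the end, whereas you substitute immediately and cite that same skew-adjointness as the standard consequence of (\ref{compatible-connection-I2}); the underlying computation is identical.
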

\begin{proof} \rm Let $\nabla \in C(\mathcal{E})$ be a compatible connection and  $\nabla_j,j=1,2,3$ be its components  as defined in the proof of proposition (\ref{compatible-connection-I}). If we define ${\widetilde{\nabla}}_j=i \nabla_j, j=1,2,3$, then ${\widetilde{\nabla}}_j$'s satisfy (\ref{compatible-connection-II1}) because $\delta_j=i.d_j,j=12,3$ and (\ref{compatible-connection-I1}) holds. Similarly (\ref{compatible-connection-II2}) follows from (\ref{compatible-connection-I2}). Thus the triple ${\widetilde{\nabla}}_j,j=1,2,3$ defines a compatible connection $\widetilde{\nabla} 
\in \widetilde{C}({\mathcal{E}})$. This proves the map $\Phi$ is well defined with the given domain and range. In fact it is an isomorphism.
Let $\widetilde{\nabla}$ denote the extended connection as defined on (\ref{extended-connection}) then using propositions, (\ref{recall-I},\ref{differential}), we get,
\begin{eqnarray*}
\widetilde{\nabla}(\xi \otimes \sigma_1) & = & \sum_{j\ne 1} {\widetilde{\nabla}}_j(\xi) \otimes \sigma_j \sigma_1 \\
\widetilde{\nabla}(\xi \otimes \sigma_2) & = & \sum_{j\ne 2} {\widetilde{\nabla}}_j(\xi) \otimes \sigma_j \sigma_2 \\
\widetilde{\nabla}(\xi \otimes \sigma_3) & = & \sum_{j\ne 3} {\widetilde{\nabla}}_j(\xi) \otimes \sigma_j \sigma_3 -\frac{1}{\alpha} \xi \otimes \sigma_3. \\
\end{eqnarray*} 
The curvature $\varTheta$ of the connection $\widetilde{\nabla}$ is given by $\varTheta=\widetilde{\nabla} \circ \widetilde{\nabla}$. Which turns out to be,
\[\varTheta(\xi)=i[{\widetilde{\nabla}}_2,{\widetilde{\nabla}}_3](\xi)\otimes \sigma_1 + i [{\widetilde{\nabla}}_3,{\widetilde{\nabla}}_1](\xi) \otimes \sigma_2 +(i[{\widetilde{\nabla}}_1,{\widetilde{\nabla}}_2]-\frac{1}{\alpha}{\widetilde{\nabla}}_3)(\xi) \otimes \sigma_3.\]
Repeated application of \ref{compatible-connection-II2} gives,
\begin{eqnarray}
\label{id1} \delta_k(\langle \xi, {\widetilde{\nabla}}_j (\eta) \rangle &=& \langle \xi, {\widetilde{\nabla}}_k ({\widetilde{\nabla}}_j (\eta)) \rangle -\langle {\widetilde{\nabla}}_k (\xi), {\widetilde{\nabla}}_j (\eta) \rangle,\\
\label{id2} \delta_j(\langle \xi, {\widetilde{\nabla}}_k (\eta) \rangle &=& \langle \xi, {\widetilde{\nabla}}_j ({\widetilde{\nabla}}_k (\eta)) \rangle -\langle {\widetilde{\nabla}}_j (\xi), {\widetilde{\nabla}}_k (\eta) \rangle, \\
\label{id3} \delta_j(\langle {\widetilde{\nabla}}_k (\xi), \eta \rangle &=& \langle {\widetilde{\nabla}}_k (\xi), {\widetilde{\nabla}}_j (\eta) \rangle -\langle {\widetilde{\nabla}}_j ({\widetilde{\nabla}}_k (\xi )), \eta \rangle ,\\
\label{id4} \delta_k(\langle {\widetilde{\nabla}}_j (\xi), \eta \rangle &=& \langle {\widetilde{\nabla}}_j (\xi), {\widetilde{\nabla}}_k (\eta) \rangle
-\langle {\widetilde{\nabla}}_k ({\widetilde{\nabla}}_j (\xi )), \eta \rangle . 
\end{eqnarray}
Now, (\ref{id1})-(\ref{id2})+(\ref{id3})-(\ref{id4}) gives,
\begin{eqnarray}
\label{nabla*} \langle \xi, [{\widetilde{\nabla}}_k,{\widetilde{\nabla}}_j](\eta) \rangle - \langle [{\widetilde{\nabla}}_j,{\widetilde{\nabla}}_k](\xi), \eta \rangle &=& [\delta_k,\delta_j]\langle \xi, \eta \rangle).\end{eqnarray}
Combining (\ref{delta-bracket}) and (\ref{nabla*}) we get,
\begin{eqnarray*}
\langle \xi, [{\widetilde{\nabla}}_1,{\widetilde{\nabla}}_3](\eta) \rangle &=& \langle [{\widetilde{\nabla}}_3,{\widetilde{\nabla}}_1](\xi), \eta \rangle \\
\langle \xi, [{\widetilde{\nabla}}_2,{\widetilde{\nabla}}_3](\eta) \rangle &=& \langle [{\widetilde{\nabla}}_3,{\widetilde{\nabla}}_2](\xi), \eta \rangle \\
\langle \xi, (i[{\widetilde{\nabla}}_1,{\widetilde{\nabla}}_2]-\frac{1}{\alpha}{\widetilde{\nabla}}_3)(\eta) \rangle &=& \langle (i[{\widetilde{\nabla}}_1,{\widetilde{\nabla}}_2]-\frac{1}{\alpha}{\widetilde{\nabla}}_3)(\xi), \eta \rangle 
\end{eqnarray*}
These relations give,
\begin{eqnarray*} \widetilde{\textit{YM}}\,(\widetilde{\nabla}) &=& \langle\langle\, \varTheta,\varTheta\, \rangle\rangle  \\
& = &  \frac{1}{2} (tr_\omega {|D|}^{-3}) \widetilde{\tau}( -{([{\widetilde{\nabla}}_1,{\widetilde{\nabla}}_3])}^2-{([{\widetilde{\nabla}}_2,{\widetilde{\nabla}}_3])}^2+{(i[{\widetilde{\nabla}}_1,{\widetilde{\nabla}}_2]-\frac{1}{\alpha}{\widetilde{\nabla}}_3)}^2 )\\
& = & -\frac{1}{2} (tr_\omega {|D|}^{-3}) \widetilde{\tau}({([\nabla_{1},\nabla_{3}])}^2 + {([\nabla_{2},\nabla_{3}])}^2+([\nabla_{1},\nabla_{2}]+\frac{1}{\alpha}\nabla_{3})^2))\\
&=& \frac{1}{2} (tr_\omega {|D|}^{-3}) \textit{YM}(\nabla).\\
\end{eqnarray*}
\end{proof}
 
\end{document}